\DeclareMathOperator*{\argmin}{arg\,min}
\tikzstyle{block} = [draw, fill=blue!10, rectangle, 
\tikzstyle{sum} = [draw, fill=blue!10, circle, node distance=1cm]
\tikzstyle{input} = [coordinate]
\tikzstyle{output} = [coordinate]
\tikzstyle{pinstyle} = [pin edge={to-,thin,black}]
\newtheorem{thm}{Theorem}
\newtheorem{assum}{Assumption}
\newtheorem{prop}{Proposition}
\title{\LARGE \bf
Distributed Synthesis Using Accelerated ADMM
}
\author{Mohamadreza Ahmadi, Murat Cubuktepe, Ufuk Topcu, and Takashi Tanaka
\thanks{M. Ahmadi, M. Cubuktepe, U. Topcu and T. Tanaka are with the Department of Aerospace Engineering and Engineering Mechanics, and the Institute for Computational Engineering and Sciences (ICES), University of Texas at Austin, 201 E 24th St, Austin, TX 78712. e-mail: (\{mrahmadi, mcubuktepe, utopcu, ttanaka\}@utexas.edu). This work was supported by projects AFRL UTC 17-S8401-10-C1 and ONR N000141613165.
}
}
\begin{document}

\maketitle
\thispagestyle{empty}
\pagestyle{empty}

\begin{abstract}

We propose a convex distributed optimization algorithm for synthesizing robust controllers for large-scale continuous time systems subject to exogenous disturbances. Given a large scale system, instead of solving the larger centralized synthesis task, we decompose the problem into a set of smaller synthesis problems for the local subsystems with a given interconnection topology. Hence, the synthesis problem is constrained to the sparsity pattern dictated by the interconnection topology. To this end, for each subsystem, we solve a local dissipation inequality and then check a small-gain like condition for the overall system.  To minimize the effect of disturbances, we consider the $\mathrm{H}_\infty$ synthesis problems. We instantiate the distributed synthesis method using  accelerated alternating direction method of multipliers (ADMM) with convergence rate $O(\frac{1}{k^2})$ with $k$ being the number of iterations.  \end{abstract}

\section{INTRODUCTION}

Large scale systems are ubiquitous in nature and engineering.  Examples of such systems run the gamut of biochemical networks~\cite{PRESCOTT2014113} to smart grids~\cite{6740090}. The synthesis of control laws for large scale systems, however, is fettered by several challenges, despite the availability of well-known solutions to the LQR, $\mathrm{H}_2$, and $\mathrm{H}_\infty$ synthesis problem for linear systems~\cite{dullerud2005course}. 

Conventional feedback synthesis techniques are implemented in a centralized manner and often lead to controller gains {that require dense interconnection topology}. {Such} controllers require full communication between the subsystems that may not be practical. Therefore, significant research has been carried out in the literature to  synthesize controllers with a given sparsity pattern. In general, the linear synthesis problem subject to a given sparsity pattern in the controller is NP-hard~\cite{S0363012994272630}. However, explicit
solutions for special structures~\cite{1556730,6980067,shah2013cal}, convex relaxations~\cite{6948355,7464306} and sub-optimal non-convex methods~\cite{6497509,5893917} were proposed in the literature. 

Furthermore, the computational cost of synthesizing centralized controllers become prohibitive for large scale systems. In the analysis domain, several attempts have been made to decompose the centralized analysis problem using dissipativity theory based on the notion that if the subsystems of an interconnected large scale system satisfy some dissipativity property, the overall interconnected system satisfies some stability or input-output property~\cite{willems72}. At the expense of some degree of conservatism, \cite{6070953,7583731} propose  methods for analyzing large-scale dynamical systems by decomposing them into coupled {smaller} subsystems that are significantly simpler from a computational perspective. Furthermore, distributed optimization techniques can be used to carry out the computations in parallel.   In~\cite{5400309}, it was demonstrated that one can study the stability of a large scale system by  checking a number of dissipation inequalities for smaller local subsystems and then verifying a global gain condition, where the dual decomposition technique was used. Reference~\cite{Meissen201555} gives a decompositional dissipativity analysis methodology for a large scale system based on ADMM. \cite{wang2016system,wang2017separable} consider distributed synthesis for finite horizon discrete time linear systems, and they employ an ADMM based approach for a class of problems that can be decomposed into subproblems.



Distributed optimization methods such as ADMM~\cite{boyd2011distributed} are used to solve large-scale convex optimization problems by decomposing them into a set of smaller problems. These methods are most useful when interior-point methods do not scale well due to solving  a large linear system on the order of number of variables. However, these algorithms can be very slow to converge to high accuracy, unlike interior-point methods, where high accuracy can be attained in a reasonable amount of iterations. To attain faster convergence to high accuracy,  \emph{accelerated} versions of first order methods have been proposed in the literature \cite{hale2008fixed, becker2011nesta, nesterov2013gradient, beck2009fast, chen2012fast}. These methods achieve $O(\frac{1}{k^2})$ convergence rate after $k$ iterations, which is shown to be indeed optimal for a first order method \cite{nesterov1983method}. The main drawback of these approaches is, they usually require the objective function to be differentiable, which disallows constraints in the optimization problem. 

In this paper, we consider large-scale systems with a given interconnection topology. The interconnection topology of the system dictates a sparsity pattern to the global controller. Given this controller structure, we tackle the global synthesis problem by synthesizing local controllers such that some global system property, namely, stability or $\mathrm{H}_\infty$ performance is guaranteed. To this end, we use a set of local dissipation inequalities and adopt a block-diagonal global Lyapunov function structure.  This decomposes the global synthesis problem into a number of smaller ones for local subsystems. Moreover, for large scale problems, we provide a computational formulation based on accelerated ADMM. We use a smoothing approach~\cite{nesterov2005smooth,becker2011templates} to achieve a faster convergence rate than the conventional ADMM. Specifically, when the objective function is strongly convex, the ADMM algorithm can be modified with an extra acceleration step to attain $O(\frac{1}{k^2})$ convergence rate~\cite{goldstein2014fast}. We show the applicability of our approach in numerical experiments.

The rest of the paper is organized as follows. The next section presents  the problem formulation and some preliminary results. In Section~\ref{sec:main}, we describe the distributed stabilizing  and $\mathrm{H}_\infty$ synthesis method based on dissipativity theory. In Section~\ref{sec:admm}, we bring forward the computational formulation based on accelerated ADMM. Two examples are given in Section~\ref{sec:examples} to illustrate the proposed method. Finally, Section~\ref{sec:conclusions} concludes the paper and provides directions for future research.


\textbf{Notation:}
The notations employed in this paper are relatively straightforward. $\mathbb{R}_{\ge 0}$ denotes the set $[0,\infty)$. $\| \cdot \|$ denotes the Euclidean vector norm on $Q$ and $\langle \cdot \rangle$ the inner product. We denote the set of $n \times n$ real symmetric matrices as $\mathbb{S}^n$. For a matrix $A \in \mathbb{R}^{m \times n}$, $A^\dagger$ denotes the pseudo-inverse of $A$. Note that the pseudo-inverse of $A$ always exists and is unique~\cite{Golub1996}. For a function $f:A\to B$, $f \in L^p(A, B)$, $1\le p  < \infty$, implies that $\left( \int_A |f(t)|^p dt \right)^{\frac{1}{p}} < \infty$ and $\sup_{t \in A} |f(t)| < \infty$ for $p=\infty$

\section{Problem Formulation}

We consider the controlled linear dynamical systems described~by
\begin{align} \label{eq:subsystems}
\mathcal{G}: \begin{cases} \dot{x} =  Ax+Bu+Gw,  \\  
y =  Cx,
\end{cases}
\end{align}
where, $x \in \mathcal{X} \subseteq \mathbb{R}^n$ are the states of the system, $y \in \mathcal{Y} \subseteq \mathbb{R}^{n_y}$ are the outputs of the system, $w \in \mathcal{W} \subseteq \mathbb{R}^{n_w}$ are the exogenous disturbances and $u \in \mathcal{U} \subseteq \mathbb{R}^m$ are the control signals. The matrices $A: \mathbb{R}^{n \times n} $, $B \in \mathbb{R}^{n\times m}$, $G \in \mathbb{R}^{n\times n_w}$ and $C \in \mathbb{R}^{n_y \times n}$.

\begin{figure}
\centering
\includegraphics[width=6cm]{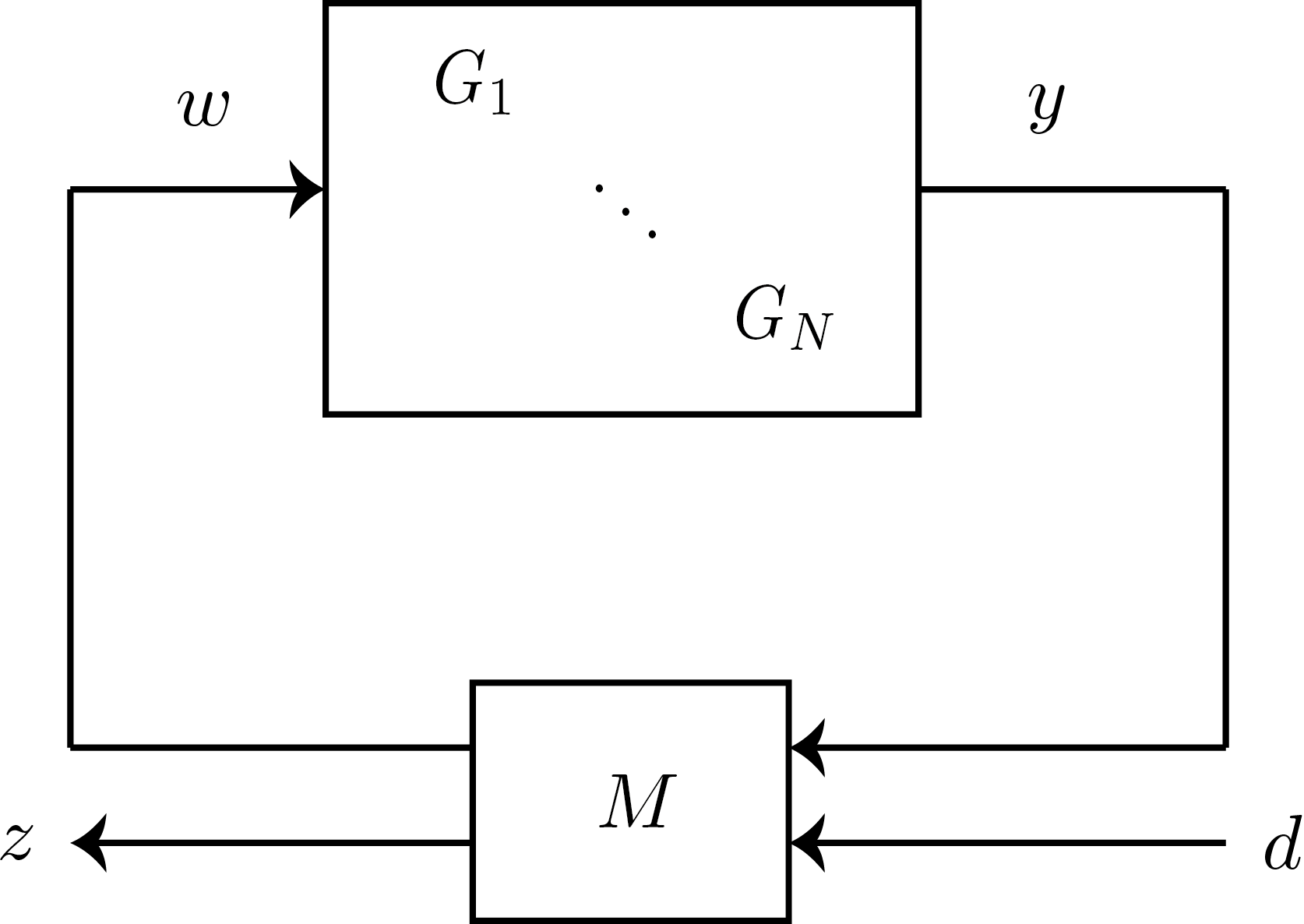}
\caption{Interconnected system with input $d$ and output $z$.}\label{fig1}
\end{figure}

We consider interconnected systems as illustrated in Fig.~\ref{fig1}, where the subsystems $\{ \mathcal{G}_i \}_{i=1}^N$ are known and have dynamics in the form of~\eqref{eq:subsystems}. We associate each subsystem with a set of matrices $\{ A_i,B_i,G_i,C_i\}$ and $x_i \in \mathcal{X}_i \subseteq \mathbb{R}^{n_i}$, $w_i \in \mathcal{W}_i \subseteq \mathbb{R}^{n_w^i}$ and $y_i \in \mathcal{Y}_i \subseteq \mathbb{R}^{n_y^i}$. The static interconnection is characterized by a matrix $M \in \mathbb{R}^{n_w} \times \mathbb{R}^{n_y}$ where $n = \sum_{i=1}^N n_i$, $n_w = \sum_{i=1}^N n_w^i$ and $n_y = \sum_{i=1}^N n_y^i$. That is, $M$ satisfies 
\begin{equation} \label{eq:interconnection}
\begin{bmatrix} w \\ z \end{bmatrix} = M \begin{bmatrix} y \\ d \end{bmatrix},
\end{equation}
where $d \in \mathbb{R}^{n_d}$ and $z \in \mathbb{R}^{n_z}$. We assume this interconnection is well-posed, i.e., for all $d \in L_{2e}$ and initial condition $x(0) \in \mathbb{R}^n$, there exist unique $z,w,y \in L_{2e}$ that casually depend on $d$. Furthermore, we define
$$
M=
\left[
\begin{array}{c|c}
M_{wy}& M_{wd} \\
\hline
M_{zy} & M_{zd}
\end{array}
\right],
$$
where $M_{wy} \in \mathbb{R}^{n_w \times n_y}$, $M_{wd} \in \mathbb{R}^{n_w \times n_d}$, $M_{zy} \in \mathbb{R}^{n_z \times n_y}$, and $M_{zd} \in \mathbb{R}^{n_z \times n_d}$

The local  and global supply rates, $W_i(w_i,y_i)$ and $W(d,z)$, respectively, are defined by quadratic functions. That is, 
\begin{equation} \label{fsfFF}
W(x,d,z) = \begin{bmatrix}  d \\ z \end{bmatrix}^\prime S \begin{bmatrix}  d \\ z \end{bmatrix}.
\end{equation}

In~\cite{Meissen201555}, the authors, inspired by the work~\cite{5400309}, showed that certifying the dissipativity of an overall interconnected system can be concluded if each of the subsystems satisfy the local dissipativity property. Let 
\begin{equation}
\mathcal{L}_i = \bigg \{ S_i \mid \text{$\begin{bmatrix} w_i \\ y_i \end{bmatrix}^\prime S_i \begin{bmatrix} w_i \\ y_i \end{bmatrix}$} \le 0 \bigg\},
\end{equation}
and 
\begin{equation} \label{sddccxcxglobal}
\mathcal{L} = \bigg\{ \{S_i\}_{i=1}^N \mid \begin{bmatrix} M \\ I_{n_y}  \end{bmatrix}^\prime P_\pi^\prime Q P_\pi \begin{bmatrix} M \\ I_{n_y}   \end{bmatrix} < 0 \bigg\},
\end{equation}
where $Q = diag(S_1,\ldots,S_N,-S)$ and $P_\pi$ is a permutation matrix defined by
\begin{equation}
\begin{bmatrix} w_1 \\ y_1 \\ \vdots \\ w_N \\ y_N \\d \\z \end{bmatrix} = P_\pi \begin{bmatrix} w \\ z  \\y \\d \end{bmatrix} .
\end{equation}

\begin{prop}[Proposition 1, \cite{Meissen201555}] \label{prop1}
Consider the interconnection of $N$ subsystems as given in~\eqref{eq:interconnection} with the global supply rate~\eqref{fsfFF}. If there exists $\{ S_i \}_{i=1}^N$ satisfying
\begin{equation}\label{ddsdsdsdsdsccdcdxx}
S_i \in \mathcal{L}_i,~~i=1,\ldots,N,
\end{equation}
and
\begin{equation} \label{eq:global}
(S_1,\ldots, S_N, -S) \in \mathcal{L},
\end{equation}
then the interconnected system is dissipative with respect to the global supply rate. A storage function certifying  global dissipativity is $V(x) = \sum_{i=1}^N V_i(x_i)$, where $V_i$ is the storage function certifying dissipativity of subsystem $i$ as in $\mathcal{L}_i$.
\end{prop}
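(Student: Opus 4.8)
The plan is to follow the classical dissipativity-composition argument: assemble a global storage function from the local ones, bound its derivative by the sum of the local supply rates, and then use the global coupling condition \eqref{eq:global} to trade that sum for the global supply rate.

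First I would exploit the hypothesis \eqref{ddsdsdsdsdsccdcdxx}. By the interpretation of $\mathcal{L}_i$ recorded in the statement, each $S_i \in \mathcal{L}_i$ certifies that subsystem $\mathcal{G}_i$ is dissipative with respect to the local supply rate
\[
W_i(w_i,y_i) = \begin{bmatrix} w_i \\ y_i \end{bmatrix}^\prime S_i \begin{bmatrix} w_i \\ y_i \end{bmatrix}
\]
via a nonnegative storage function $V_i(x_i)$, i.e. $\dot V_i(x_i) \le W_i(w_i,y_i)$ along the trajectories of $\mathcal{G}_i$. Taking $V(x) = \sum_{i=1}^N V_i(x_i)$ yields a nonnegative candidate whose derivative along the interconnected dynamics satisfies $\dot V(x) \le \sum_{i=1}^N W_i(w_i,y_i)$.

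The crux, and the step I expect to be the most delicate, is a purely algebraic identity relating the quadratic form in \eqref{sddccxcxglobal} to $\sum_i W_i - W$ evaluated on the interconnection. Using well-posedness of \eqref{eq:interconnection}, all signals are determined by the pair $(y,d)$ through the interconnection relation, so that stacking gives
\[
\begin{bmatrix} w \\ z \\ y \\ d \end{bmatrix} = \begin{bmatrix} M \\ I \end{bmatrix} \begin{bmatrix} y \\ d \end{bmatrix}.
\]
Applying the permutation $P_\pi$ to reorder these signals as $(w_1,y_1,\ldots,w_N,y_N,d,z)$ and reading off the block-diagonal $Q = \mathrm{diag}(S_1,\ldots,S_N,-S)$, I would verify
\[
\sum_{i=1}^N W_i(w_i,y_i) - W(x,d,z) = \begin{bmatrix} y \\ d \end{bmatrix}^\prime \begin{bmatrix} M \\ I \end{bmatrix}^\prime P_\pi^\prime Q P_\pi \begin{bmatrix} M \\ I \end{bmatrix} \begin{bmatrix} y \\ d \end{bmatrix}.
\]
The obstacle here is bookkeeping: one must track $P_\pi$ and the substitution $[w\,;z] = M[y\,;d]$ carefully so that the left-hand side reassembles exactly into the congruence transform of $Q$, with the final $-S$ block producing $-W$.

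Finally, the global condition \eqref{eq:global}, i.e. negative definiteness of the matrix in \eqref{sddccxcxglobal}, forces the right-hand side of the identity to be nonpositive for every choice of $(y,d)$, so that $\sum_{i=1}^N W_i \le W$ along the interconnection. Combined with $\dot V \le \sum_i W_i$ from the second step, this gives $\dot V(x) \le W(x,d,z)$; together with $V \ge 0$ this certifies dissipativity of the interconnected system with respect to the global supply rate, with storage function $V = \sum_{i=1}^N V_i$, as claimed.
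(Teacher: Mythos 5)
Your proof is correct and takes essentially the same route as the paper: the paper imports Proposition~\ref{prop1} from \cite{Meissen201555} without reproving it, but your key step --- stacking $(w,z,y,d)$ through the interconnection, applying $P_\pi$, and using negative definiteness of the matrix in \eqref{sddccxcxglobal} to conclude $\sum_{i=1}^N W_i \le W$ --- is exactly the manipulation the paper performs inline (multiplying \eqref{sddccxcxglobal} on the left and right by the stacked $(y,d)$ vector and invoking \eqref{eq:interconnection}) in the proofs of Theorem~\ref{thmmain} and Proposition~2. Combining that with the summed local dissipation inequalities $\dot V \le \sum_{i=1}^N W_i$ is the standard compositional argument, so there is no gap.
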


\section{Distributed Synthesis Using Dissipativity}\label{sec:main}

The dynamics of each subsystem of $\mathcal{G}$ is characterized by
\begin{align} \label{eq:subsystemslin}
\mathcal{G}_i: \begin{cases}
\dot{x}_i =  A_ix_i+B_iu_i+G_iw_i,  \\  
y_i =  C_ix_i,
\end{cases}
\end{align}
with $i=1,2,\ldots,N$. Using~\eqref{eq:interconnection}, the overall system can be described as
\begin{align}
\mathcal{G}: \begin{cases}
\dot{x} = (A+GM_{wy}C) ~x + Bu+M_{wd}~d, \\
z = M_{zy}C~x + M_{zd} ~d,
\end{cases}
\end{align}
where $A= diag(A_1,\ldots,A_N) \in \mathbb{R}^{n \times n}$, $B= diag(B_1,\ldots,B_N) \in \mathbb{R}^{n \times n_u}$, $G= diag(G_1,\ldots,G_N) \in \mathbb{R}^{n \times n_w}$ and $C = diag(C_1,\ldots,C_N)\in \mathbb{R}^{n_y \times n}$. Although $A$ has a block-diagonal structure, the overall system matrix $A+GM_{wy}C$ does not necessarily has any special sparsity pattern or structure. Moreover, the controller takes the form of 
$$
u = Kx,
$$
where $K = diag(K_1,\ldots,K_N) \in \mathbb{R}^{n_u \times n}$. Therefore, $K$ has a sparsity pattern inherited from the topology of the interconnections. In other words, for any subsystem $\mathcal{G}_i$, the local controllers have only access  to the states $x_i$ of the local subsystem. 

In the following, we formulate conditions based on linear matrix inequalities to synthesize stabilizing and robust feedback controllers $u_i=K_ix_i$ for the subsystem~\eqref{eq:subsystemslin} such that the overall system satisfies some dissipative property. The performance of these local controllers are certified by local storage functions $V(x_i) = x_i^\prime P_i x_i$, $i=1,2,\ldots,N$ based on Proposition~\ref{prop1}.

%

\subsection{Distributed  Synthesis of Stabilizing Controllers} \label{sec:stable}

The following result states that the distributed search for a set of stabilizing controllers is a convex problem. Before we state the theorem, we define 
\begin{equation}
S_i=
\left[
\begin{array}{c|c}
S_i^{11} & S_i^{12} \\
\hline
S_i^{21} & S_i^{22}
\end{array}
\right],
\end{equation}
where $S_i^{11} \in \mathbb{S}^{n_w^i}$, $S_i^{12} \in \mathbb{R}^{n_w^i \times n_y^i}$, $S_i^{11} \in \mathbb{R}^{n_y^i \times n_w^i}$ and $S_i^{22} \in \mathbb{S}^{n_y^i}$.

\begin{thm} \label{thmmain}
Consider subsystems~\eqref{eq:subsystemslin} and the interconnection topology given by~\eqref{eq:interconnection}, with $d,z\equiv0$. If there exist  families of matrices $\{S_i\}_{i=1}^N$, $\{P_i\}_{i=1}^N$ ($P_i \in \mathbb{S}^{n_i}$) and $\{Y_i\}_{i=1}^N$ ($Y_i \in \mathbb{R}^{n_i \times n_i}$) such that~\eqref{eq:global} is satisfied with $S=0$, and
\begin{equation} \label{con1}
P_i > 0,\quad i=1,2,\ldots, N,
\end{equation}
\begin{multline} \label{con2}
\left[ \begin{matrix} A_i^\prime P_i + P_i A_i + Y_i^\prime + Y_i - C_i^\prime S_i^{22} C_i \\ G_i^\prime P_i -S_i^{12} C_i
\end{matrix} \right. \\
\left. \begin{matrix}
P_iG_i -C_iS_i^{21} \\
-S_i^{11}
\end{matrix}  \right] \le 0,
\end{multline}
for $i=1,2,\ldots, N$, then the local controllers
\begin{equation} \label{eq:controllers}
K_i = B_i^\dagger P_i^{-1} Y_i,
\end{equation}
 render the overall system asymptotically stable.
\end{thm}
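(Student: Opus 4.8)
The plan is to reduce the claim to Proposition~\ref{prop1}: I will show that the proposed gains turn the linear matrix inequality~\eqref{con2} into the local dissipation inequality for each closed-loop subsystem certified by $V_i(x_i)=x_i^\prime P_i x_i$, and then upgrade the resulting global dissipativity (with zero supply rate) to asymptotic stability by exploiting the strictness of~\eqref{eq:global}.

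First I would close the loop. Substituting $u_i=K_ix_i$ into~\eqref{eq:subsystemslin} gives $\dot x_i=(A_i+B_iK_i)x_i+G_iw_i$ and $y_i=C_ix_i$. Differentiating $V_i$ along these trajectories, the local dissipation inequality $\dot V_i \le \left[\begin{smallmatrix} w_i \\ y_i\end{smallmatrix}\right]^\prime S_i \left[\begin{smallmatrix} w_i \\ y_i\end{smallmatrix}\right]$ is a quadratic form in $(x_i,w_i)$ that is nonpositive for all arguments if and only if the symmetric matrix with $(1,1)$ block $(A_i+B_iK_i)^\prime P_i+P_i(A_i+B_iK_i)-C_i^\prime S_i^{22}C_i$, off-diagonal block $P_iG_i-C_i^\prime S_i^{21}$, and $(2,2)$ block $-S_i^{11}$ is negative semidefinite.

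The key step is the linearizing change of variables $Y_i=P_iB_iK_i$: under it the $(1,1)$ block becomes $A_i^\prime P_i+P_iA_i+Y_i^\prime+Y_i-C_i^\prime S_i^{22}C_i$, so the closed-loop dissipation matrix coincides with the left-hand side of~\eqref{con2}. It then remains to check that $K_i=B_i^\dagger P_i^{-1}Y_i$ actually recovers $P_iB_iK_i=Y_i$; since $P_i>0$ this is equivalent to $B_iB_i^\dagger P_i^{-1}Y_i=P_i^{-1}Y_i$, i.e. to $P_i^{-1}Y_i$ lying in the range of $B_i$. I expect this to be the main obstacle. It holds automatically when each $B_i$ has full row rank (so that $B_iB_i^\dagger=I_{n_i}$), and otherwise must be enforced by constraining $Y_i$ to the range of $P_iB_i$ during synthesis. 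Granting it, \eqref{con2} is precisely the statement $S_i\in\mathcal{L}_i$ with storage function $V_i$, which is hypothesis~\eqref{ddsdsdsdsdsccdcdxx} of Proposition~\ref{prop1}; the remaining hypothesis~\eqref{eq:global} is the standing assumption of the theorem with $S=0$.

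Finally I would invoke Proposition~\ref{prop1} to obtain the global storage function $V(x)=\sum_{i=1}^N V_i(x_i)=x^\prime Px$ with $P=\mathrm{diag}(P_1,\ldots,P_N)>0$ certifying dissipativity with respect to the zero supply rate ($S=0$, $d,z\equiv0$). Summing the local inequalities and inserting the interconnection relation $w=M_{wy}y$ yields $\dot V \le \sum_{i=1}^N \left[\begin{smallmatrix} w_i \\ y_i\end{smallmatrix}\right]^\prime S_i \left[\begin{smallmatrix} w_i \\ y_i\end{smallmatrix}\right]$, which is exactly the quadratic form appearing in the definition~\eqref{sddccxcxglobal} of $\mathcal{L}$ (the $-S$ term dropping since $S=0$). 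The strict inequality in~\eqref{eq:global} then forces $\dot V<0$ along every nonzero trajectory, so $V$ is a strict Lyapunov function and the interconnected system is asymptotically stable, invoking LaSalle's invariance principle if $C$ fails to be injective.
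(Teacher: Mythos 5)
Your proposal is correct and follows essentially the same route as the paper's own proof: local storage functions $V_i(x_i)=x_i^\prime P_i x_i$, the linearizing substitution $Y_i = P_i B_i K_i$ that turns~\eqref{con2} into the local dissipation inequality for each closed-loop subsystem, and the global condition~\eqref{eq:global} with $S=0$ (equivalently, Proposition~\ref{prop1}'s machinery) to conclude $\frac{dV}{dt}<0$. If anything, you are more careful than the paper on two points it silently glosses over: the fact that $K_i=B_i^\dagger P_i^{-1}Y_i$ recovers $P_iB_iK_i=Y_i$ only when $P_i^{-1}Y_i$ lies in the range of $B_i$ (automatic when $B_iB_i^\dagger = I$, otherwise an extra constraint on $Y_i$), and the fact that the strict inequality from~\eqref{sddccxcxglobal} only yields $\frac{dV}{dt}<0$ where $y\neq 0$, so that full asymptotic stability needs a LaSalle/detectability argument of the kind the paper only introduces later (Assumption~1) for Proposition~2.
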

\begin{proof}
Let $V_i(x) =x_i^\prime P_i x_i$, $i=1,2,\ldots,N$ be a family of local candidate storage functions. Inequalities \eqref{con1} ensures that $V_i$'s are positive definite functions of $x_i$. Computing the time derivative of $V_i$ gives
\begin{equation*}
\frac{d V_i}{dt} = \dot{x}_i^\prime P_i x_i + {x}_i^\prime P_i \dot{x}_i.
\end{equation*}
Substituting \eqref{eq:subsystemslin} and $u_i = K_i x_i$ yields
\begin{multline*}
\frac{d V_i}{dt} = \left(A_ix_i +B_i K_i x_i + G_i w_i \right)^\prime P_i x_i \\
+ {x}_i^\prime P_i  \left(A_ix_i +B_i K_i x_i + G_i w_i \right)  \\
= x_i^\prime \left(A_i^\prime P_i + P_i A_i +K_i^\prime B_i^\prime P_i + P_iB_i K_i \right) x_i \\+ w_i^\prime G_i^\prime P_i x_i + x_i^\prime P_i G_i w_i.
\end{multline*}
Writing the last line of the above expression in quadratic form and substituting \eqref{eq:controllers} gives
\begin{multline} \label{ssfffdd}
\frac{d V_i}{dt} = 
\begin{bmatrix} x_i \\ w_i \end{bmatrix}^\prime \begin{bmatrix} A_i^\prime P_i + P_i A_i +Y^\prime_i + Y_i & P_iG_i \\ G_i^\prime P_i & 0  \end{bmatrix} \begin{bmatrix} x_i \\ w_i \end{bmatrix}
\end{multline}
Since~\eqref{con2} holds, if we multiply it left and right by $\begin{bmatrix} x_i \\ w_i \end{bmatrix}^\prime$ and $\begin{bmatrix} x_i \\ w_i \end{bmatrix}$, respectively, we obtain
\begin{equation}
\frac{d V_i}{dt} \le \begin{bmatrix} w_i \\ C_i x \end{bmatrix}^\prime \left[
\begin{array}{c|c}
S_i^{11} & S_i^{12} \\
\hline
S_i^{21} & S_i^{22}
\end{array}
\right] \begin{bmatrix} w_i \\ C_i x \end{bmatrix},
\end{equation}
in which we used~\eqref{ssfffdd}. Summing up the terms in the above expression for all $i$ gives
\begin{equation} \label{dscccs}
\frac{d V}{dt} \le  \sum_{i=1}^N \begin{bmatrix} w_i \\ y_i\end{bmatrix}^\prime 
S_i
\begin{bmatrix} w_i \\ y_i \end{bmatrix}.
\end{equation}
Furthermore, since~\eqref{eq:global} holds with $S=0$, multiplying the LMI in~\eqref{sddccxcxglobal} from left and right by $\begin{bmatrix} y \\ d\end{bmatrix}^\prime$ and $\begin{bmatrix} y \\ d\end{bmatrix}$, respectively, and using the interconnection topology relation~\eqref{eq:interconnection}, we have
$$
  \sum_{i=1}^N \begin{bmatrix} w_i \\ y_i\end{bmatrix}^\prime 
S_i
\begin{bmatrix} w_i \\ y_i \end{bmatrix} < 0.
$$
Therefore,
$
\frac{d V}{dt} \le  \sum_{i=1}^N \begin{bmatrix} w_i \\ y_i\end{bmatrix}^\prime 
S_i
\begin{bmatrix} w_i \\ y_i \end{bmatrix} < 0.
$
\end{proof}

\subsection{Distributed $\mathrm{H}_\infty$-Synthesis} \label{sec:robust}

Theorem~\ref{thmmain} brings forward conditions under which the interconnected system can be stabilized by synthesizing a set of local controllers $\{K_i \}_{i=1}^N$. Depending on the choice of matrix $Q$, in particular $S$, in the global dissipativity constraint~\eqref{sddccxcxglobal}, we can have different synthesis performances. The next proposition states that we can synthesize local controllers such that the global $\mathrm{H}_\infty$-norm from the inputs $d$ to $z$ is minimized. 

\begin{assum}[Zero-State Detectability]
Let $n_y < n$. It holds that
$$
rank \left( M_{zy} \begin{bmatrix} C_1 & 0& 0 \\ 0& \ddots & 0\\ 0& 0& C_N \end{bmatrix} \right) = n_y.
$$
\end{assum}

Note that the above assumption means that $y \equiv 0$ implies $x \equiv 0$. This property  will be used in the Proposition below to prove asymptotic stability. 

\begin{prop}
Consider subsystems~\eqref{eq:subsystemslin} and the interconnection topology given by~\eqref{eq:interconnection}. Let Assumption~1 holds and $d \in L_{2}$. If there exists a positive constant $\eta$, and families of matrices $\{S_i\}_{i=1}^N$,  $\{P_i\}_{i=1}^N$ ($P_i \in \mathbb{S}^{n_i}$) and $\{Y_i\}_{i=1}^N$ ($Y_i \in \mathbb{R}^{n_i \times n_i}$) that solves the following optimization problem 
\begin{eqnarray}
&\displaystyle \min_{\{S_i\}_{i=1}^N, \{P_i\}_{i=1}^N, \{Y_i\}_{i=1}^N} \eta & \nonumber \\
&\text{subject to \eqref{con1}, \eqref{con2}, and \eqref{eq:global}}& \label{eq:globalobj}
\end{eqnarray}
with 
\begin{equation}\label{rreweewewew}
S = \begin{bmatrix} \eta I_{n_d} & 0 \\ 0 & -I_{n_z} \end{bmatrix},
\end{equation}
then, the local controllers \eqref{eq:controllers} render the interconnected system asymptotically stable for $d \equiv 0$. Furthermore, when $x(0)=0$, we have
\begin{equation} \label{hinfnorm}
\| G \|_{\mathrm{H}_\infty}:= \inf_{\|d\|_{L_2} \neq 0} \frac{\|z\|_{L_2}}{\|d\|_{L_2}} = \sqrt{\eta}.
\end{equation}
\end{prop}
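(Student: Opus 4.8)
The plan is to collapse both assertions into a single global dissipation inequality of the form $\dot V \le \eta\|d\|^2 - \|z\|^2$ for the aggregate storage function $V(x)=\sum_{i=1}^N x_i^\prime P_i x_i$, and then to read off asymptotic stability from the case $d\equiv 0$ and the $\mathrm{H}_\infty$ bound from integrating the inequality under $x(0)=0$. The scaffolding is Proposition~\ref{prop1} together with the computation already carried out in the proof of Theorem~\ref{thmmain}; the only genuinely new element is the choice of global supply rate~\eqref{rreweewewew}.

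First I would establish the dissipation inequality. The derivation of the local inequalities is verbatim that of Theorem~\ref{thmmain}: substituting the dynamics~\eqref{eq:subsystemslin}, the controller~\eqref{eq:controllers}, and writing $\dot V_i$ in quadratic form, constraint~\eqref{con2} yields $\dot V_i \le \begin{bmatrix} w_i \\ y_i \end{bmatrix}^\prime S_i \begin{bmatrix} w_i \\ y_i \end{bmatrix}$, and summation over $i$ gives $\dot V \le \sum_{i=1}^N \begin{bmatrix} w_i \\ y_i \end{bmatrix}^\prime S_i \begin{bmatrix} w_i \\ y_i \end{bmatrix}$. The new ingredient is to evaluate the global constraint~\eqref{eq:global} with $S$ as in~\eqref{rreweewewew}: multiplying the LMI in~\eqref{sddccxcxglobal} on the right by $\begin{bmatrix} y \\ d \end{bmatrix}$ and on the left by its transpose, and using both the interconnection relation~\eqref{eq:interconnection} and $Q=\mathrm{diag}(S_1,\dots,S_N,-S)$, I obtain $\sum_{i=1}^N \begin{bmatrix} w_i \\ y_i \end{bmatrix}^\prime S_i \begin{bmatrix} w_i \\ y_i \end{bmatrix} < \begin{bmatrix} d \\ z \end{bmatrix}^\prime S \begin{bmatrix} d \\ z \end{bmatrix} = \eta\|d\|^2 - \|z\|^2$. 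Chaining the two bounds produces $\dot V \le \eta\|d\|^2 - \|z\|^2$.

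Next I would treat the two conclusions. For $d\equiv 0$ the inequality becomes $\dot V \le -\|z\|^2 \le 0$; since~\eqref{con1} makes $V$ positive definite and radially unbounded, the origin is Lyapunov stable, and I would invoke LaSalle's invariance principle to upgrade this to asymptotic stability: on the largest invariant set in $\{\dot V = 0\}$ one has $z\equiv 0$, hence $M_{zy}Cx\equiv 0$, and Assumption~1 (zero-state detectability) forces $x\equiv 0$, so the only invariant trajectory is the origin. For the gain bound, setting $x(0)=0$ and integrating $\dot V \le \eta\|d\|^2 - \|z\|^2$ over $[0,\infty)$, together with $V\ge 0$ and $V(x(0))=0$, yields $\|z\|_{L_2}^2 \le \eta\,\|d\|_{L_2}^2$, so $\|z\|_{L_2}/\|d\|_{L_2}\le \sqrt{\eta}$ for every nonzero $d\in L_2$ and therefore $\|G\|_{\mathrm{H}_\infty}\le\sqrt{\eta}$.

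The main obstacle is the reverse direction needed for the claimed equality in~\eqref{hinfnorm}: the dissipativity argument only delivers an \emph{upper} bound $\sqrt{\eta}$ for any feasible $\eta$, whereas the statement asserts equality. Tightness must come from the fact that $\eta$ is minimized in~\eqref{eq:globalobj}; I would argue through the converse (necessity) direction of the bounded-real lemma that at the optimizer the bound cannot be strict, since otherwise a strictly smaller $\eta$ would remain feasible and contradict optimality, so the optimal value satisfies $\sqrt{\eta}=\|G\|_{\mathrm{H}_\infty}$. I expect this step to require the most care, both because establishing that the worst-case disturbance attains the bound is the delicate part, and because the definition written in~\eqref{hinfnorm} uses an infimum where the induced gain is naturally a supremum, a point I would need to reconcile before claiming equality.
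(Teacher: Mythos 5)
Your proof follows the paper's own argument essentially step for step: the same chaining of the local dissipation inequalities from Theorem~\ref{thmmain} with the global condition~\eqref{eq:global} evaluated at the supply rate~\eqref{rreweewewew} to obtain $\frac{dV}{dt} \le \eta |d|^2 - |z|^2$, the same LaSalle-plus-Assumption~1 argument for asymptotic stability when $d \equiv 0$, and the same integration argument under $x(0)=0$ yielding $\|z\|_{L_2}^2 \le \eta \|d\|_{L_2}^2$.

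The one place you diverge is the final equality in~\eqref{hinfnorm}, and there your diagnosis is sound but your proposed repair would not work. You are right that the dissipation argument delivers only the upper bound $\|G\|_{\mathrm{H}_\infty} \le \sqrt{\eta}$; note that the paper's own proof does no more --- it ends with the unsubstantiated sentence that ``minimization over $\eta$ gives~\eqref{hinfnorm}.'' However, the fix you sketch (converse bounded-real lemma plus optimality of $\eta$) fails for a structural reason: the converse direction of the bounded-real lemma produces, for each $\eta > \|G\|_{\mathrm{H}_\infty}^2$, \emph{some} quadratic storage function certifying the bound, but that certificate is in general a full matrix $P$, whereas the feasible set of~\eqref{eq:globalobj} is restricted to block-diagonal storage functions $P = \mathrm{diag}(P_1,\ldots,P_N)$ together with the particular local/global decomposition of Proposition~\ref{prop1}. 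Nothing guarantees a certificate with this structure exists for $\eta$ near $\|G\|_{\mathrm{H}_\infty}^2$ --- this is precisely the conservatism the paper concedes in Section~\ref{sec:conserv} --- so the minimizer $\eta^\ast$ may satisfy $\sqrt{\eta^\ast} > \|G\|_{\mathrm{H}_\infty}$ strictly, and equality in~\eqref{hinfnorm} cannot be recovered by any elaboration of this argument. (You also correctly flag that the $\inf$ in~\eqref{hinfnorm} should be a $\sup$ for the induced $L_2$ gain.) In short: your upper-bound conclusion is the correct and fully provable statement, and it is also all that the paper's own proof actually establishes.
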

\vspace{.8cm}
\begin{proof}
The proof follows the same lines as the proof of Theorem~\ref{thmmain}. Since~\eqref{eq:global} holds with $S$ given by \eqref{rreweewewew}, multiplying the LMI in~\eqref{sddccxcxglobal} from left and right by $\begin{bmatrix} y \\ d\end{bmatrix}^\prime$ and $\begin{bmatrix} y \\ d\end{bmatrix}$, respectively, and using the interconnection topology relation~\eqref{eq:interconnection}, we obtain
$$
  \sum_{i=1}^N \begin{bmatrix} w_i \\ y_i\end{bmatrix}^\prime 
S_i
\begin{bmatrix} w_i \\ y_i \end{bmatrix} < \begin{bmatrix} d \\ z\end{bmatrix}^\prime 
\begin{bmatrix} \eta I_{n_d} & 0 \\ 0 & -I_{n_z} \end{bmatrix}
\begin{bmatrix} d \\ z \end{bmatrix}.
$$
From~\eqref{con2} and then~\eqref{dscccs}, we have
\begin{equation} \label{dccscscdssd}
\frac{dV}{dt} < \eta |d|^2 - |z|^2.
\end{equation}
If  $d\equiv 0$, we obtain
$$
\frac{dV}{dt} <  - |z|^2.
$$
From Assumption~1, we infer that $|z|^2=0$ if and only if $x \equiv 0$. Thus, from LaSalle's Invariance Theorem~\cite{khalil1996noninear}, we deduce that the system is asymptotically stable.

Moreover, integrating both sides of inequality~\eqref{dccscscdssd} from time $0$ to $\infty$ gives
$$
V(x(\infty)) - V(x(0)) < \eta \int_0^\infty |d(t)|^2~dt - \int_0^\infty |z(t)|^2~dt. 
$$
From the fact that the storage function is positive definite (it is the finite sum of positive definite functions), we infer that, for $x(0)\equiv 0$, $V(x(0)) = 0$ and we have
$$
0< \eta \int_0^\infty |d(t)|^2~dt - \int_0^\infty |z(t)|^2~dt.
$$
That is,
$$
\|z\|^2_{L_2} < \eta \|d\|^2_{L_2}.
$$
Hence, minimization over $\eta$ gives~\eqref{hinfnorm}.
\end{proof}

The formulation presented here can also be extended to accommodate class of uncertainties and nonlinearities  captured by Integral Quadratic Constraints (see Appendix).

\subsection{Discussion on Conservativeness}\label{sec:conserv}

In the following, we briefly discuss  the conservativeness of the proposed method in Section~\ref{sec:stable} and Section~\ref{sec:robust}. 

Given the subsystems $\mathcal{G}_1,\ldots, \mathcal{G}_N$, the global Lyapunov function we consider is given by
$$
V(x) = \sum_{i=1}^N V_i(x_i) =\begin{bmatrix} x_1 \\ \vdots \\ x_N  \end{bmatrix}^\prime
\begin{bmatrix} P_1 & 0 & 0 \\ 0 & \ddots & 0 \\ 0 & 0& P_N  \end{bmatrix} \begin{bmatrix} x_1 \\ \vdots \\ x_N  \end{bmatrix}.
$$
This is indeed a conservative structure, since the cross terms in the quadratic Lyapunov function are zero. In fact, Theorem~\ref{thmmain} searches over all \textit{block-diagonal Lyapunov functions} in a distributed manner using dissipativity theory. Block-diagonal Lyapunov functions were also considered in~\cite{7979611} to design local controllers based on chordal decomposition techniques for block-chordal
sparse matrices. In addition, it was demonstrated in~\cite{yang17}, that block-diagonal Lyapunov functions lead to sparsity
invariance and can be used design structured controllers.

Although converse theorems for diagonal Lyapunov functions  have received tremendous attention in the literature (especially for positive systems~\cite{RANTZER201572,VARGA19761}), studies on finding the class of stable systems that admit block-diagonal Lyapunov functions are relatively few and far between~\cite{feingold1962}.   Recently, in~\cite{aivar17}, the authors  define a suitable comparison matrix, and then demonstrate that if the comparison matrix is scaled diagonal dominant,  the stability of a block system is equivalent to the existence of a block diagonal Lyapunov function.

\section{Computational Formulation Using Accelerated ADMM}\label{sec:admm}
For small-scale systems, we can solve the feasibility and optimization problems in Theorem 1 and Proposition 2 using publicly available SDP solvers like MOSEK \cite{andersen2012mosek}, SeDuMi \cite{sturm1999using} or SDPT3 \cite{toh1999sdpt3}. But, these SDP solvers do not scale well for larger problems. For larger problems, the ADMM algorithm \cite{boyd2011distributed} allows us to decompose convex optimization problems into a set of smaller problems. A generic convex optimization problem
\begin{align}
 \text{minimize} &\quad  f(y)\nonumber\\
 \text{subject to} &\quad  y\in \mathit{C},
\end{align}
where $x \in \mathbb{R}^n$, $f$ is a convex function, and $C$ is a convex set, can be written in ADMM form as 

\begin{align}
 \text{minimize} &\quad  f(y)+g(v)\nonumber \\ 
 \text{subject to} &\quad  y=v, \label{eq:admm2}
\end{align}
where $g$ is the indicator function of $\mathit{C}$.

Using the above form,  problem \eqref{eq:global} can be written in ADMM form with $f(y)$ is defined as sum of $\mu$ and the indicator function of \eqref{con1} and \eqref{con2}, and $g(v)$ is defined as the indicator function of \eqref{eq:global}. Then, the scaled form of ADMM form for problem in \eqref{eq:admm2} is

\begin{align*}
& y^{k+1} = \argmin_{y_i\in \mathcal{L}_i} f(y) + (\rho /2)\vert\vert y-v^k+u^k\vert\vert^2_2,\\
& v^{k+1}=\argmin_{v \in \mathcal{L}} g(v) + (\rho /2)\vert\vert y^{k+1}-v+u^k\vert\vert^2_2,\\
& u^{k+1}=u^k+y^{k+1}-v^{k+1},
\end{align*}
where $y$ and $v$ are the vectorized form of the matrices $\{S_i\}_{i=1}^N, \{P_i\}_{i=1}^N, \{Y_i\}_{i=1}^N$, $u$ is the scaled dual variable and $\rho > 0$ is the \emph{penalty parameter}. Since $f(y)$ is separable for each subsystem, the ADMM algorithm can be parallelized as follows:

\begin{align*}
& y_i^{k+1} = \argmin_{y_i\in \mathcal{L}_i} f_i(y) + (\rho /2)\vert\vert y_i-v_i^k+u_i^k\vert\vert^2_2,\\
& v^{k+1}=\argmin_{v \in \mathcal{L}} g(v) + (\rho /2)\vert\vert y^{k+1}-v+u^k\vert\vert^2_2,\\
& u^{k+1}=u^k+y^{k+1}-v^{k+1},
\end{align*}

Under mild assumptions, the ADMM algorithm converges \cite{boyd2011distributed}, but the convergence is only asymptotic, therefore it may require many iterations to achieve sufficient accuracy.

\subsection{Accelerated ADMM}

For faster convergence, the so-called \textit{accelerated} versions of similar first order algorithms have been proposed in the literature \cite{hale2008fixed, becker2011nesta, nesterov2013gradient, beck2009fast, chen2012fast}, and the methods achieve $O(\frac{1}{k^2})$ convergence after $k$ iterations, which is shown to be optimal for a first order method \cite{nesterov1983method}. The main drawback of these approaches is, they usually require the function $f(y)$ to be differentiable with a known Lipschitz constant on the $\nabla f(y)$, which does not exist when the problem has constraints. For the case when $f(y)$ or $g(y)$ is not strongly convex or smooth, smoothing approaches have been used \cite{nesterov2005smooth,becker2011templates} to improve convergence. However, to the best of our knowledge, these methods have not been applied in distributed controller synthesis problems.

Consider the following perturbation of \eqref{eq:globalobj}:

\begin{eqnarray}
&\displaystyle \min_{\{S_i\}_{i=1}^N, \{P_i\}_{i=1}^N, \{Y_i\}_{i=1}^N} \eta +\mu ~d_i(S_i,P_i,Y_i)& \nonumber \\
&\text{subject to \eqref{con1}, \eqref{con2}, and \eqref{eq:global}}& \label{eq:globalobj_approx}
\end{eqnarray}
for some fixed smoothing parameter $\mu >0$ and a strongly convex function $d$ that satisfies

\begin{equation}
d(y)\geq d(y_0) + \dfrac{1}{2}\vert\vert y-y_0\vert\vert^2_2
\end{equation}
for some point $y_0 \in \mathbb{R}^{n}.$ Specifically, we choose $d_i=\lVert S_i\rVert_F+\lVert P_i\rVert_F+\lVert Y_i\rVert_F$, where  $\| \cdot \|_F$ is the Frobenius norm.  For some problems, it is shown that for small enough $\mu$, the approximate problem \eqref{eq:globalobj_approx} is equivalent to the original problem \eqref{eq:globalobj} \cite{becker2011templates}. 

When $f(x)$ and $g(x)$ are strongly convex, the ADMM algorithm can be modified with an acceleration step to achieve $O(\frac{1}{k^2})$ convergence after $k$ iterations  \cite{goldstein2014fast}. Then, the accelerated ADMM algorithm is

\begin{align*}
& y_i^{k} = \argmin_{y_i\in \mathcal{L}_i} f_i(y) + (\rho /2)\vert\vert y_i-\bar{v}_i^k+\bar{u}_i^k\vert\vert^2_2,\\
& v^{k}=\argmin_{v \in \mathcal{L}} g(z) + (\rho /2)\vert\vert y^{k}-v+\bar{u}^k\vert\vert^2_2,\\
& u^{k}=\bar{u}^k+y^{k}-v^{k},\\
& \alpha_{k+1}= \dfrac{1+\sqrt{1+4\alpha^2_k}}{2}\\
& \bar{v}^{k+1}=v_k+\dfrac{\alpha_k -1}{\alpha_{k+1}}(v^k-v^{k-1})\\
& \bar{u}^{k+1}=u^{k}+\dfrac{\alpha_k -1}{\alpha_{k+1}}(u^k-u^{k-1}),
\end{align*}
where $\rho$ is a positive constant that satisfies $\rho\leq \mu$, and~$\alpha_1=1$.

Note that $y$ update can be carried out in parallel, while achieving $O(\frac{1}{k^2})$ convergence, which cannot be achieved by the standard ADMM or accelerated proximal methods due to constraints in the problem.

\section{Numerical Experiments}\label{sec:examples}

In this section, we illustrate the proposed distributed synthesis method using two examples. 
The  example pertains to a system with 100-states, where we compare the convergence rate of ADMM with accelerated ADMM. We implemented both standard ADMM and accelerated ADMM algorithms in MATLAB using the CVX toolbox \cite{grant2008cvx} and MOSEK \cite{andersen2012mosek} to solve SDP problems.
\begin{figure}
\centering
\includegraphics[width=5cm]{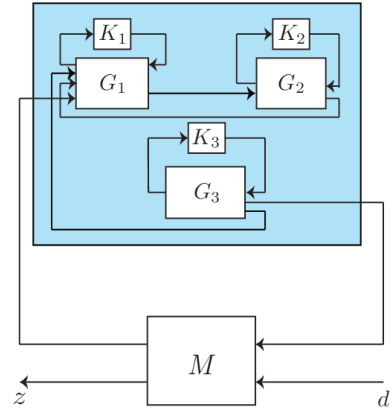}
\caption{The interconnected system in Example I.}\label{fig22}
\end{figure}

\subsection{Example I}

We consider a modified version of the example in \cite{topcu2009compositional} as illustrated in Fig~\ref{fig22}. For $i = 1, 2, 3$, the subsystems $\mathcal{G}_i$ are characterized as follows:
$$
\mathcal{G}_1: \begin{cases}
\dot{x}_1 =\begin{bmatrix}
4 & 0\\
2 & -2
\end{bmatrix}x_1+I_2\begin{bmatrix}
w_2\\
w_4+d
\end{bmatrix}+I_2 u_1,\\
w_1 =0.5\begin{bmatrix}
1 & 1
\end{bmatrix}x_1,\\
\end{cases}
$$
$$
\mathcal{G}_2: \begin{cases}
\dot{x}_2 =\begin{bmatrix}
8 & 0\\
12 & -2
\end{bmatrix}x_2+\begin{bmatrix}
1\\
1
\end{bmatrix}w_1+I_2u_2,\\
\begin{bmatrix}w_2\\w_3\end{bmatrix} =0.5I_2x_2,\\
\end{cases}
$$
$$
\mathcal{G}_3: \begin{cases} \dot{x}_3 =\begin{bmatrix}
2 & 0\\
2 & -2
\end{bmatrix}x_3+\begin{bmatrix}
1\\
1
\end{bmatrix}w_3+I_2 u_3,\\
w_3 =0.4\begin{bmatrix}
1 & 1
\end{bmatrix}x_3.
\end{cases}
$$
The global input and output are given as

\begin{align*}
d=\begin{bmatrix}
0 & 1
\end{bmatrix}x_3 \quad \text{and} \quad z=w_4=0.4\begin{bmatrix}
1 & 1
\end{bmatrix}x_3.
\end{align*}
The overall system with zero-input has 3 eigenvalues with positive real parts, therefore the zero-input system is unstable. We apply the compositional approach underlined by the problem in \eqref{eq:globalobj_approx} to synthesize a controller to minimize the $\mathrm{H}_\infty$-norm between the output $z$ and input of $d$ subsystem 1. After 19 iterations with the accelerated ADMM method, the value of the objective is $\eta=4.9\cdot 10^{-4}$ with the following controllers:

\begin{align*}
K_1&=\begin{bmatrix}
-4.7760 &  -1.1078\\
   -1.1138 & -0.6947
\end{bmatrix},\\
 K_2&=\begin{bmatrix}
  -6.5288 & -4.5095\\
  -5.5751 & -3.8507
  \end{bmatrix},\\
   K_3&=\begin{bmatrix}
   -100.4125 &  -90.2510\\
   -56.2576 &  -50.5649
\end{bmatrix}.
\end{align*}

and following local Lyapunov functions:

\begin{align*}
P_1&=\begin{bmatrix}
  0.001 &  -2.7\cdot 10^{-6}\\
   -2.7\cdot 10^{-9} &   0.0010
\end{bmatrix},\\
 P_2&=\begin{bmatrix}
   0.0019 &  -9.87\cdot 10^{-4}\\
 -9.87\cdot 10^{-4} &  0.0021
  \end{bmatrix},\\
   P_3&=\begin{bmatrix}
   0.0045 &   -0.0064\\
    -0.0064 & 0.0129
\end{bmatrix}.
\end{align*}

\subsection{Example II}

In order to test and compare the different forms of ADMM methods discussed in Section~\ref{sec:admm}, we randomly generated $N=20$ unstable and controllable LTI subsystems, each with 5 states, 2 inputs, and 2 outputs. The dynamics of each subsystem is characterized by the equations \eqref{eq:subsystemslin}, with the maximum real part for an eigenvalue of $A_i$ is normalized to~1, and random interconnection matrix $M$ that was randomly generated with 5$\%$ its entries being nonzero. 

 The iterative methods were initialized using $S_i^0=P_i^0=Y_i^0=U_i^0=I$. For each method, we plot the norm of \emph{primal residual} in Figure~\ref{primal}, which is defined as $r^{k}=x^k-x^k$, and it is the residual for primal feasibility. Also, we show the  norm of the \emph{dual residual} $s^k=\rho(z^k-z^{k-1})$ in Figure~\ref{dual}, which can be viewed as a residual for the dual feasibility condition.

We remark that the accelerated ADMM method significantly improves the convergence rate for primal and dual residuals compared to standard ADMM. After 20 iterations, the accelerated method achieves convergence within the tolerance $\epsilon=10^{-6}$, which is not achievable by the standard ADMM method after 50 iterations, and it may require many iterations for the standard ADMM method to achieve high accuracy.

%
%
\definecolor{mycolor1}{rgb}{0.00000,0.44700,0.74100}%
\definecolor{mycolor2}{rgb}{0.85000,0.32500,0.09800}%
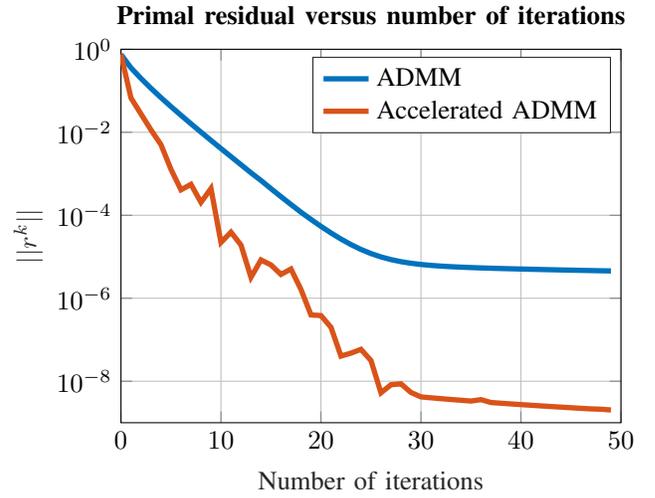
\begin{figure}[t!]

\begin{tikzpicture}

\begin{axis}[%
width=2.617in,
height=1.956in,
at={(0in,0in)},
scale only axis,
xmin=0,
xmax=50,
xlabel style={font=\color{white!15!black}},
xlabel={Number of iterations},
ymode=log,
ymin=1e-09,
ymax=1,
yminorticks=true,
xmajorgrids=true,
ymajorgrids=true,
ylabel style={font=\color{white!15!black}},
ylabel={$\vert \vert r^k \vert \vert$},
axis background/.style={fill=white},
title style={font=\bfseries},
title={Primal residual versus number of iterations},
legend style={legend cell align=left, align=left, draw=white!15!black}
]
\addplot [color=mycolor1, line width=2]
  table[row sep=crcr]{%
0	0.751991884610038\\
1	0.357781350551713\\
2	0.198997421533658\\
3	0.114891197302817\\
4	0.0685094093197512\\
5	0.041693374938756\\
6	0.0257327563144059\\
7	0.01604573030686\\
8	0.0100874170947716\\
9	0.00637950529737862\\
10	0.00405498318960514\\
11	0.00258753823941439\\
12	0.00165453904403427\\
13	0.00105556545032591\\
14	0.00068960217473674\\
15	0.000437878434462686\\
16	0.000280503388736723\\
17	0.000181867390474613\\
18	0.000118555713386077\\
19	7.94494645915716e-05\\
20	5.40549418109825e-05\\
21	3.75240053168519e-05\\
22	2.6732520240814e-05\\
23	1.96652885828018e-05\\
24	1.50199975857515e-05\\
25	1.19504007926634e-05\\
26	9.90856142053907e-06\\
27	8.54136119693403e-06\\
28	7.61086822840304e-06\\
29	6.96841267737509e-06\\
30	6.51706953038719e-06\\
31	6.19043826999837e-06\\
32	5.94833862592477e-06\\
33	5.76168320513378e-06\\
34	5.61281924775011e-06\\
35	5.48949606483194e-06\\
36	5.38494572919021e-06\\
37	5.29211219525047e-06\\
38	5.2084279251977e-06\\
39	5.13171716020708e-06\\
40	5.06028274373521e-06\\
41	4.99291122664376e-06\\
42	4.92985995758986e-06\\
43	4.86860181741916e-06\\
44	4.80979082083772e-06\\
45	4.75372708491706e-06\\
46	4.69924346398837e-06\\
47	4.6473376494384e-06\\
48	4.59636174003318e-06\\
49	4.54846658993368e-06\\
};
\addlegendentry{ADMM}

\addplot [color=mycolor2, line width=2]
  table[row sep=crcr]{%
0	0.751991884610038\\
1	0.0673032558341399\\
2	0.0277202469886968\\
3	0.0114452422348174\\
4	0.00503045173826925\\
5	0.00127983615654185\\
6	0.000409844090094394\\
7	0.000556038061589787\\
8	0.000204925118437037\\
9	0.000441974099346351\\
10	2.21476558009799e-05\\
11	3.91890083913263e-05\\
12	1.91739693691686e-05\\
13	3.19820853360168e-06\\
14	8.39332878688404e-06\\
15	6.34970680127553e-06\\
16	3.75212853278728e-06\\
17	5.0670992636783e-06\\
18	1.63223697579291e-06\\
19	3.96455309593934e-07\\
20	3.82551833770957e-07\\
21	1.99583670260246e-07\\
22	4.03414111963312e-08\\
23	4.74577722412321e-08\\
24	5.86702550128733e-08\\
25	3.15947624099375e-08\\
26	5.26188942427961e-09\\
27	8.23268410077172e-09\\
28	8.65578153253254e-09\\
29	5.35932407165479e-09\\
30	4.19035698059003e-09\\
31	3.98613585295972e-09\\
32	3.80088955202929e-09\\
33	3.63215969977453e-09\\
34	3.47656495576565e-09\\
35	3.33165174042846e-09\\
36	3.59715951564158e-09\\
37	3.06899414898805e-09\\
38	2.95054855470801e-09\\
39	2.83927355713649e-09\\
40	2.73497267248653e-09\\
41	2.63696065173945e-09\\
42	2.5446242564241e-09\\
43	2.45757917763824e-09\\
44	2.37542267150147e-09\\
45	2.29775585255651e-09\\
46	2.22377032737588e-09\\
47	2.15430630754125e-09\\
48	2.11723683330475e-09\\
49	2.02515002907451e-09\\
};
\addlegendentry{Accelerated ADMM}

\end{axis}
\end{tikzpicture}%
\caption{Norm of primal residual versus number of iterations for the decentralized synthesis problem with standard and accelerated ADMM.}
\label{primal}
\end{figure}

%
%
\definecolor{mycolor1}{rgb}{0.00000,0.44700,0.74100}%
\definecolor{mycolor2}{rgb}{0.85000,0.32500,0.09800}%
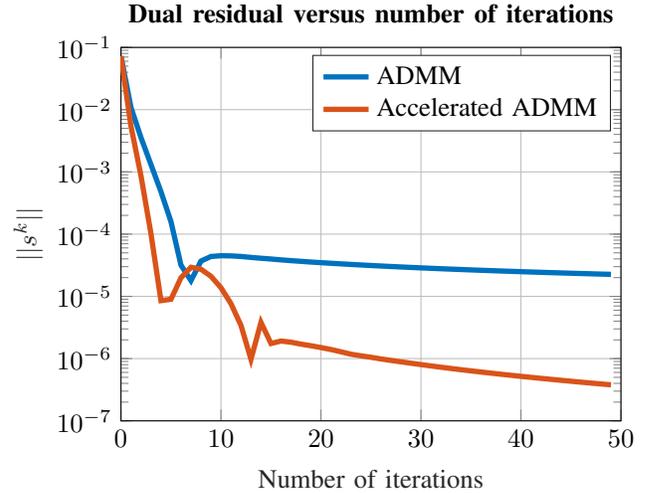
\begin{figure}[t!]
\begin{tikzpicture}

\begin{axis}[%
width=2.617in,
height=1.956in,
at={(0in,0in)},
scale only axis,
xmin=0,
xmax=50,
xlabel style={font=\color{white!15!black}},
xlabel={Number of iterations},
ymode=log,
ymin=1e-07,
ymax=0.1,
yminorticks=true,
xmajorgrids=true,
ymajorgrids=true,
ylabel style={font=\color{white!15!black}},
ylabel={$\vert\vert s^k \vert\vert$},
axis background/.style={fill=white},
title style={font=\bfseries},
title={Dual residual versus number of iterations},
legend style={legend cell align=left, align=left, draw=white!15!black}
]
\addplot [color=mycolor1, line width=2]
  table[row sep=crcr]{%
0	0.0720859424603907\\
1	0.0108513028658773\\
2	0.0035686009891074\\
3	0.00131124483249812\\
4	0.000480675047401737\\
5	0.000158726644419021\\
6	3.17425279922815e-05\\
7	1.79206013585656e-05\\
8	3.69250337064268e-05\\
9	4.37341493251442e-05\\
10	4.49710702964756e-05\\
11	4.46872771966619e-05\\
12	4.3769873137639e-05\\
13	4.2239178297272e-05\\
14	4.09609842512613e-05\\
15	3.98287560767496e-05\\
16	3.85717806299027e-05\\
17	3.75017866322694e-05\\
18	3.65572288056569e-05\\
19	3.55909078346223e-05\\
20	3.47799951196055e-05\\
21	3.40071276520688e-05\\
22	3.32791396257115e-05\\
23	3.25915264626013e-05\\
24	3.19415860315715e-05\\
25	3.13263029634374e-05\\
26	3.07432317457742e-05\\
27	3.01904830457413e-05\\
28	2.96654264275033e-05\\
29	2.91659927697705e-05\\
30	2.86906010435022e-05\\
31	2.82371737311717e-05\\
32	2.78048844891406e-05\\
33	2.73913641495637e-05\\
34	2.69957720112086e-05\\
35	2.66164851347561e-05\\
36	2.62527600125676e-05\\
37	2.59034649990494e-05\\
38	2.55675490515022e-05\\
39	2.52445410002423e-05\\
40	2.49333421760745e-05\\
41	2.46325062868826e-05\\
42	2.43433762058129e-05\\
43	2.40636396589892e-05\\
44	2.37934377948792e-05\\
45	2.3532121665096e-05\\
46	2.32789560265621e-05\\
47	2.30335336049257e-05\\
48	2.27953954026004e-05\\
49	2.25647536782804e-05\\
};
\addlegendentry{ADMM}

\addplot [color=mycolor2, line width=2]
  table[row sep=crcr]{%
0	0.0720859424603907\\
1	0.00542565143293864\\
2	0.000867925395597808\\
3	0.000101351405352896\\
4	8.53313362143551e-06\\
5	8.96913290603025e-06\\
6	1.98943887934433e-05\\
7	2.9327914877785e-05\\
8	2.72777159703234e-05\\
9	2.10141163504452e-05\\
10	1.37532455383172e-05\\
11	7.630339306816e-06\\
12	3.41644573093699e-06\\
13	9.76447259552614e-07\\
14	3.82300067603664e-06\\
15	1.74801476427455e-06\\
16	1.91498224782661e-06\\
17	1.83536244935804e-06\\
18	1.7074651437403e-06\\
19	1.60989502684613e-06\\
20	1.50437086216262e-06\\
21	1.39948420333064e-06\\
22	1.28669290444464e-06\\
23	1.17558829955311e-06\\
24	1.10826572247476e-06\\
25	1.05041681599324e-06\\
26	9.87681991017697e-07\\
27	9.34864786343096e-07\\
28	8.87230880813193e-07\\
29	8.42789028220101e-07\\
30	8.01344872815289e-07\\
31	7.62794007033406e-07\\
32	7.26986471633482e-07\\
33	6.93997578601836e-07\\
34	6.63429503192935e-07\\
35	6.35081091800047e-07\\
36	6.08691919538717e-07\\
37	5.83971270523919e-07\\
38	5.60849624605757e-07\\
39	5.3916218659357e-07\\
40	5.18729129473825e-07\\
41	4.99509519608389e-07\\
42	4.81388970783615e-07\\
43	4.64303725917092e-07\\
44	4.48127716783416e-07\\
45	4.32833662207804e-07\\
46	4.18282360353503e-07\\
47	4.04530404434984e-07\\
48	3.91494284233977e-07\\
49	3.79090268105843e-07\\
};
\addlegendentry{Accelerated ADMM}

\end{axis}
\end{tikzpicture}%
\caption{Norm of dual residual versus number of iterations for the decentralized synthesis problem with standard and accelerated ADMM.}
\label{dual}
\end{figure}

\section{CONCLUSIONS AND FUTURE WORK} \label{sec:conclusions}

We studied the distributed synthesis problem of large scale linear systems, for which an underlying interconnection topology is given. For such systems, we decompose the structured controller design problem (inherited by the interconnection topology) into a number of smaller local synthesis problems associated with local dissipation inequalities and a global gain condition. {Furthermore, we proposed a distributed optimization method with smoothing techniques, which enables to employ accelerated accelerated ADMM. Numerical results show that the accelerated ADMM method significantly improves the convergence rate compared to standard ADMM. } As discussed in Section~\ref{sec:conserv}, the block-diagonal Lyapunov function structure may lead to some degree of conservatism. Future research will explore other Lyapunov function structures that are less conservative and, at the same time, lead to a decompositional synthesis technique. In particular, the vector Lyapunov approach~\cite{7330553} seems promising in this respect. Future research will also consider the extensions of the distributed synthesis method to nonlinear (polynomial) systems and hybrid dynamical systems.

\bibliography{references}
\bibliographystyle{IEEEtran}


\appendix

\section{Uncertainties and Nonlinearities Described by Integral Quadratic Constraints}

In this section, we consider large-scale systems with subsystems that are subject to nonlinearities, time variations, and
uncertain parameters which  can be characterized by an integral quadratic constraint (IQC)~\cite{587335,yak71}. Let $\Gamma : j\mathbb{R} \to \mathbb{C}^{n_\mu \times n_\zeta}$ be a measurable Hermitian-valued function. We say $\Delta$ satisfies the IQC defined by $\Gamma $, if and only if the following inequality holds
\begin{equation} \label{dsdSXX}
\int_{-\infty}^{\infty} \begin{bmatrix} \hat{\mu}(j\omega) \\ \hat{\zeta}(j\omega) \end{bmatrix}^\prime \Gamma(j \omega) \begin{bmatrix} \hat{\mu}(j\omega) \\ \hat{\zeta}(j\omega) \end{bmatrix} ~d \omega \ge 0,
\end{equation}
where $\mu \in L_2$, $\zeta = \Delta(\mu)$, and $\hat{\mu}(j\omega)$ and $\hat{\zeta}(j\omega)$ are the Fourier transforms of $\mu$ and $\zeta$, respectively. If the IQC multiplier $\Gamma$ is uniformly bounded on the imaginary axis and rational, then \eqref{dsdSXX} can also be described in the time-domain, wherein $\Gamma$ can be factorized as $\Psi(j\omega)^* Q  \Psi(j\omega)$, with $Q$ being a constant matrix and $\Psi$ being a stable linear time invariant system. Let $\Psi$ have the realization
\begin{align}
\Psi : \begin{cases}
\dot{\gamma} = \tilde{A} \gamma + \tilde{B} \mu + \tilde{G}\zeta,\\
y_\psi = \tilde{C} \gamma  + \tilde{D} \mu + \tilde{H}\zeta\\
\gamma(0)=0.
\end{cases}
\end{align}
Since our formulation is based on dissipation inequalities, we consider the so called "hard" IQCs~\cite{6915700} defined by $(\Psi,Q)$. That is, IQCs of the form
\begin{equation}
\int_0^T y_\psi^\prime Q  y_\psi~dt \ge 0,
\end{equation}
for any $0\le T<\infty$, $ y_\psi$ is defined as
$$
y_\psi = \Psi \begin{bmatrix} \mu \\ \zeta \end{bmatrix}.
$$ 

To account for IQC-bounded uncertainties, we consider  local storage functions $V_i(x_i,\gamma_i) = \begin{bmatrix} x_i \\ \gamma_i \end{bmatrix}^\prime P_i \begin{bmatrix} x_i \\ \gamma_i \end{bmatrix}$ with $P_i$ of appropriate dimension depending on the realization of $\Psi_i$. Then, in order for the subsystems  to be dissipative subject to an IQC, condition~\eqref{ddsdsdsdsdsccdcdxx} should be modified to 
\begin{multline}
\left(\frac{\partial V_i}{\partial x_i}\right)^\prime \left(A_i x_i + B_i u_i + G_i w_i \right) \\+ \left(\frac{\partial V_i}{\partial \gamma_i}\right)^\prime \left(\tilde{A}_i \gamma_i + \tilde{B}_i \mu_i + \tilde{G}_i\zeta_i\right) \le 
\begin{bmatrix} w_i \\ y_i\end{bmatrix}^\prime 
S_i
\begin{bmatrix} w_i \\ y_i \end{bmatrix} \\
-\lambda_i \left(\tilde{C}_i \gamma_i  + \tilde{D}_i \mu_i + \tilde{H}_i\zeta_i\right)^\prime Q_i \left(\tilde{C}_i \gamma_i  + \tilde{D}_i \mu_i + \tilde{H}_i\zeta_i\right)
\end{multline}
for some $\lambda_i>0$. 

Then, Lemma~1 in~\cite{6915700}  can be used to formulate LMI conditions for checking dissipativity of the subsystem subject to an uncertainty or nonlinearity defined by an IQC.

\end{document}